\newtheorem{definition}{Definition}
\newtheorem{proposition}[definition]{Proposition}
\newtheorem*{theorem}{Theorem}
\newtheorem{remark}[definition]{Remark}
\numberwithin{equation}{section}
\numberwithin{definition}{section}
\newcommand{\Iff}{if\textcompwordmark f}
\newcommand{\rn}{\ensuremath{\left(\varPhi,\vect{y}\right)}}
\newcommand{\rns}{\ensuremath{\left(\varPhi^*\!,\vect{y}\right)}}
\newcommand{\vect}[1]{\boldsymbol{#1}}
\DeclareMathOperator{\RANK}{rank}
\DeclareMathOperator{\DIFF}{d\!}
\DeclareMathOperator{\TR}{tr}
\newcommand{\TU}[1]{\textup{#1}}
\begin{document}
\title{Bonnet's type theorems in the relative differential geometry of the 4-dimensional space}

\author{{Stylianos Stamatakis and Ioannis Kaffas}\\ \emph{Aristotle University of Thessaloniki}\\ \emph{Department of Mathematics}\\ \emph{GR-54124 Thessaloniki, Greece}\\  \emph{e-mail: stamata@math.auth.gr}}
\date{}
\maketitle

\renewcommand{\thefootnote}{}

\footnote{2010 \emph{Mathematics Subject Classification}: 53A05, 53A15, 53A40.}

\footnote{\emph{Key words and phrases}: relative and equiaffine differential geometry, hypersurfaces,
Peterson correspondence, relative mean curvature's functions, Bonnet's Theorems.}

\renewcommand{\thefootnote}{arabic{footnote}}
\setcounter{footnote}{0}

\begin{abstract}
\noindent We deal with hypersurfaces in the framework of the relative differential geometry in $\mathbb{R}^4$.
We consider a hypersurface $\varPhi$ in $\mathbb{R}^4$ with position vector field $\vect{x}$ which is relatively normalized by a relative normalization $\vect{y}$.
Then $\vect{y}$ is also a relative normalization of every member of the one-parameter family $\mathcal{F}$ of hypersurfaces $\varPhi_\mu$ with position vector field $\vect{x}_\mu = \vect{x} + \mu \, \vect{y}$, where $\mu$ is a real constant.
We call every hypersurface $\varPhi_\mu \in \mathcal{F}$ relatively parallel to $\varPhi$.
This consideration includes both Euclidean and Blaschke hypersurfaces of the affine differential geometry.
In this paper we express the relative mean curvature's functions of a hypersurface $\varPhi_\mu$ relatively parallel to $\varPhi$ by means of the ones of $\varPhi$ and the ``relative distance" $\mu$.
Then we prove several Bonnet's type theorems. More precisely, we show that if two relative mean curvature's functions of $\varPhi$ are constant, then there exists at least one relatively parallel hypersurface with a constant relative mean curvature's function.

\end{abstract}

\section{Introduction}\label{Section1}
            Starting point for this paper is the following Theorem of Ossian Bonnet (see \cite{Bonnet}, \cite{Kreyszig}):
\begin{theorem}
            Let $\varPhi$ be a surface in the 3-dimensional Euclidean space $\mathbb{R}^3$.\\
            $(a)$ If $\varPhi$ has a constant mean curvature $H = 1/(2\mu)$, its  parallel surfaces at the distances $2\mu$ and $\mu$ have  the constant mean curvature $-1/(2\mu)$ and the  constant Gaussian curvature $1/\mu^2$, respectively.\\
            $(b)$ If $\varPhi$ has a constant Gaussian curvature $K = 1/\mu^2$, its parallel surface at a distance $\pm \mu$ has a constant mean curvature $\mp 1/(2\mu)$.
\end{theorem}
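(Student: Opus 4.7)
The plan is to derive the classical transformation formulas for the curvatures of a parallel surface and then verify both conclusions by direct substitution. Concretely, if $\varPhi$ has unit normal field $\vect{n}$ and principal curvatures $k_1,k_2$, I would show that the parallel surface $\varPhi_t$ with position vector $\vect{x}+t\vect{n}$ has principal curvatures $\tilde k_i=k_i/(1-tk_i)$, from which
$$\tilde H = \frac{H-tK}{1-2tH+t^2K},\qquad \tilde K = \frac{K}{1-2tH+t^2K}.$$

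To establish these identities I would work in a principal frame of $\varPhi$: differentiating $\vect{x}+t\vect{n}$ and using $\DIFF\vect{n}=-k_i\,\DIFF\vect{x}$ in the $i$-th principal direction shows that the first and second fundamental forms of $\varPhi_t$ are diagonal with entries $(1-tk_i)^2$ and $k_i(1-tk_i)$ respectively. Dividing gives $\tilde k_i$, and $\tilde H$, $\tilde K$ follow at once by taking the half-sum and product.

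Granted these formulas, part (a) becomes a matter of specialisation. For constant $H=1/(2\mu)$ and $t=2\mu$ one has $1-2tH+t^2K=-(1-4\mu^2K)$, which cancels the factor $1-4\mu^2K$ visible in the numerator $H-tK=(1-4\mu^2K)/(2\mu)$ of $\tilde H$, leaving $\tilde H=-1/(2\mu)$. For $t=\mu$ the linear term $-2\mu H$ kills the constant $1$ in the denominator of $\tilde K$, so it collapses to $\mu^2K$ and $\tilde K=1/\mu^2$. For part (b), when $K=1/\mu^2$ and $t=\pm\mu$, the denominator factors as $\mp 2\mu(H\mp 1/\mu)$ and cancels against the numerator $H\mp 1/\mu$ of $\tilde H$ to give $\mp 1/(2\mu)$.

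There is no substantive obstacle beyond the algebraic bookkeeping; the only care required is with the sign convention for $\vect{n}$ (which fixes the sign of $H$ and of the displacement $t$) and with the tacit regularity hypothesis $(1-tk_1)(1-tk_2)\neq 0$ at the chosen values of $t$, which is exactly the non-vanishing of the denominators above. The main intellectual content of the theorem is the observation of these cancellations; once the transformation formulas are written down, both statements become one-line verifications.
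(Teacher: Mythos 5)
Your proposal is correct, and its algebra checks out at every step. Note that the paper itself offers no proof of this classical theorem of Bonnet (it is quoted from \cite{Bonnet}, \cite{Kreyszig} as the motivating result); nevertheless, your route --- establishing $\tilde k_i = k_i/(1-tk_i)$ and then exploiting the cancellation of a common factor between numerator and denominator of $\tilde H$ and $\tilde K$ --- is precisely the $n=2$, Euclidean specialization of the strategy the paper deploys in Sections 4--6 (compare \eqref{5.75}, \eqref{5.76} and the identities of the form $A(\mu)\bigl(1+3\mu H^*\bigr)=P(\mu)$ in Propositions \ref{Prop61}--\ref{Prop65}), so it is essentially the same approach.
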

            In a previous paper (see \cite{Stamatakis6}) the authors and I. Delivos
            extended this Theorem to the relative differential geometry in $\mathbb{R}^3$ in the following way:

            Let $\varPhi$ be a surface in the space $\mathbb{R}^3$ with position vector field $\vect{x}$, which is relatively normalized by a relative normalization $\vect{y}$.
            Consider the one-parameter family of surfaces $\varPhi_\mu$ with position vector field
\begin{equation}\label{01}
            \vect{x}_\mu = \vect{x} + \mu \, \vect{y},
\end{equation}
            where $\mu$ is a nonvanishing real constant.
            Then $\vect{y}$ is also a relative normalization of $\varPhi_\mu$.
            For an obvious reason, every surface $\varPhi_\mu$ of this family is called a relatively parallel surface of $\varPhi$ at the ``relative distance" $\mu$.
            Then we proved that the same results, which states the above Theorem of O. Bonnet, hold true in the relative differential geometry in $\mathbb{R}^3$ if the Gaussian curvature and the mean curvature are replaced by the relative curvature and the relative mean curvature, respectively.

            In this paper we work in the framework of the relative differential geometry in $\mathbb{R}^4$.
            Our approach contains the Euclidean case and the case of Blaschke hypersurfaces in affine differential geometry.
            We consider a hypersurface $\varPhi$ of $\mathbb{R}^4$ with position vector field $\vect{x}$, which is relatively normalized by a relative normalization $\vect{y}$.
            Then $\vect{y}$ is also a relative normalization of every member of the one-parameter family $\mathcal{F}$ of hypersurfaces $\varPhi_\mu$ with position vector field of the form \eqref{01}, where $\mu$ is a real nonvanishing constant.
            We call every hypersurface $\varPhi_\mu \in \mathcal{F}$ relatively parallel to $\varPhi$.

            As a first step we express the relative mean curvature's functions of a  hypersurface $\varPhi_\mu$ relatively parallel to $\varPhi$ by means of the ones of $\varPhi$ and the ``relative distance" $\mu$.

            Then we prove several Bonnet's type theorems, namely we show that if two relative mean curvature's functions of $\varPhi$ are constant, then there exists at least one relatively parallel hypersurface with a constant relative mean curvature's function.

\section{Preliminaries}\label{Section2}
           This section contains some basic definitions, formulae and results on relative differential geometry; for this purpose we have used the book \cite{Schirokow} and the monograph  \cite{Simon1991} as  general references.

            We consider a $C^{r}$-hypersurface $\varPhi = (M,\vect{x})$ in $\mathbb{R}^{n+1} $
            defined by an $n$-dimensional, oriented, connected $C^{r}$-manifold $M$, $r \geq 3$, and by a $C^{r}$-immersion  $\vect{x} \colon M \rightarrow \mathbb{R}^{n+1}$,   whose Gaussian curvature $\widetilde{K}$ never vanishes on $M$. Let $\vect{\xi}$ be the unit normal vector field to $\varPhi$ and
\begin{align}
            II &\coloneqq -  \langle \DIFF \vect{x}, \DIFF \vect{\xi} \rangle \eqqcolon h_{ij}\DIFF u^i \DIFF u^j, \quad i,j = 1,\dotsc,n,                     \label{2.11}
\end{align}
            be 
            the second 
            fundamental form of $\varPhi$,
            where $\langle \, , \, \rangle$ denotes the standard scalar product in $\mathbb{R}^{n+1}$ and $(u^1,u^2, \dotsc, u^n) \in M$ are local coordinates.

            We denote by $\partial_i f$, $\partial_j\partial_i f$ etc. the partial derivatives of a $C^r$-function (or a vector-valued function) $f$ with respect to $u^i$.
            A  $C^{r}$-mapping $\vect{y} \colon M \rightarrow \mathbb{R}^{n+1}$ is called a $C^{r}$-relative normalization of $\varPhi$, if
\begin{subequations}\label{2.15}
\begin{align}
            \RANK \left(\Big\{\partial_1 \vect{x}, \dotsc, \partial_n \vect{x}, \vect{y}\Big\}\right) &= n + 1,\\
            \RANK \left( \Big \{ \partial_1 \vect{x}, \dotsc, \partial_n \vect{x}, \partial_i \vect{y} \Big\} \right) &= n \quad \forall \,\, i = 1, \dotsc, n,
\end{align}
\end{subequations}
            The pair $(\varPhi,\vect{y})$ is called a relatively normalized hypersurface in $\mathbb{R} ^{n+1}$ and the straight line issuing from a point $P \in \varPhi$ in the direction of $\vect{y}$ is called the relative normal of \rn{} at $P$. The pair $\overline{\varPhi}=(M,\vect{y})$ is called the relative image of $(\varPhi,\vect{y})$.

            The covector $\vect{X}$ of the tangent vector space is defined by
\begin{equation}\label{2.20}
            \langle \vect{X},\partial_i\vect{x}\rangle = 0 \quad
             \text{and}\quad \langle \vect{X},\vect{y}\rangle = 1.
\end{equation}
            The quadratic  form
\begin{equation}\label{2.25}
            G \coloneqq - \langle \DIFF \vect{x}, \DIFF \vect{X}  \rangle
\end{equation}
            is called the relative metric of \rn{}. 
            For its coefficients $G_{ij}$ the following relations hold
\begin{equation*}
            G_{ij} = - \langle  \partial_i \vect{x}, \partial_j \vect{X} \rangle = \langle  \partial_j \partial_i \vect{x}, \vect{X} \rangle.
\end{equation*}
             The support function of Minkowski of the relative normalization $\vect{y}$ is defined by
\begin{equation}\label{2.35}
            q \coloneqq \langle \vect{\xi},\vect{y}\rangle \colon M \rightarrow \mathbb{R} ,\quad q\in C^{r-1}(M),
\end{equation}
            and, by virtue of \eqref{2.15}, never vanishes on $M$.

            Conversely, the relative normalization is determined by means of the support function through
\begin{equation}\label{2.40}
            \vect{y}=\nabla^{III}\!\!\left( q,\, \vect{x} \right) + q \,\vect{\xi},
\end{equation}
            where $\nabla^{III}$ denotes the first Beltrami-operator with respect to the third fundamental form $III$ of $\varPhi$ (see~ \cite[p.~197]{fM89}, \cite{Stamatakis6}).

            Because of \eqref{2.20}
\begin{equation}\label{2.45}
            \vect{X} = q^{-1} \vect{\xi}, \quad  G_{ij}=q^{-1}h_{ij},  \quad G^{(ij)} = q\, h^{(ij)},
\end{equation}
            where $h^{(ij)}$ and $G^{(ij)}$ are the inverse of the tensors $h_{ij}$ and  $G_{ij}$, respectively. From now on we shall use $G_{ij}$ for ``raising and lowering" the indices in the sense of the classical tensor notation.

            We consider the bilinear form
\begin{equation}\label{2.65}
            B \coloneqq \langle \DIFF \vect{y}, \DIFF \vect{X}  \rangle.
\end{equation}
            For its coefficients $B_{ij}$ we have
\begin{equation}\label{2.70}
            B_{ij} = \langle  \partial_i \vect{y}, \partial_j \vect{X}\rangle =  - \langle  \partial_j \partial_i \vect{y}, \vect{X}\rangle.
\end{equation}
            Then the following Weingarten type equations  are valid
\begin{equation}\label{2.80}
            \partial_i \vect{y} = -B_{i}^{j} \,  \partial_j  \vect{x}.
\end{equation}
            Let $T_P \varPhi$ be the tangent vector space of $\varPhi$ at a point $P \in \varPhi$. By means of \eqref{2.80} the relative shape (or Weingarten) operator
\begin{equation*}
            \omega \colon T_P \varPhi \rightarrow T_P \varPhi
\end{equation*}
            of the relatively normalized hypersurface \rn{} at $P$ is defined such that
\begin{equation*}
            B(\vect{u},\vect{v}) = G(\omega( \vect{u}),\vect{v})
\end{equation*}
            for $\vect{u},\vect{v} \in T_P \varPhi$ (see \cite[p.~66]{Simon1991}), or equivalently such that
\begin{equation*}
            \omega(\partial_i \vect{x}) = - \partial_i \vect{y}.
\end{equation*}
            Special mention should be made of the fact that the relative differential geometry includes both the Euclidean one, which arises for $q = 1$, or equivalently for $\vect{y} = \vect{\xi}$, and the equiaffine one, which is based upon the equiaffine normalization $\vect{y}_{\TU{\tiny{AFF}}}$.
            The last normalization is defined, on account of \eqref{2.40}, by means of the equiaffine support function
\begin{equation*}
            q_{\TU{\tiny{AFF}}} \coloneqq |\widetilde{K}|^\frac {1} {n+2}.
\end{equation*}
            The real eigenvalues of the relative shape operator $\omega$ of \rn{} are called relative principal curvatures of \rn{} and denoted by $\kappa_i$, $i = 1, \dotsc, n$  (see \cite{Simon1991}).
            Their reciprocals $R_i$ (when $\kappa_i \neq 0$) are called relative radii of curvature.

            In what follows we shall consider only relatively normalized hypersurfaces such that their relative shape operator has $n$ real eigenvalues $\kappa_i, i = 1,\dotsc,n$ (not necessarily  different).
            Their normed elementary symmetric functions are called relative mean curvature's functions of \rn{} and denoted as follows:
\begin{align}
            H_1 &\coloneqq  \frac{1}{n} \left(\kappa_1 +  \dotsb + \kappa_n \right),        \label{2.90}\\
            H_2 &\coloneqq  \frac{1}{\binom{n}{2}} \left(\kappa_1 \, \kappa_2 + \dotsb + \kappa_{n-1} \, \kappa_n \right),                                                           \label{2.95}\\
            \vdots              \nonumber           \\
            H_n &\coloneqq  \kappa_1 \,\kappa_2 \dots \kappa_n. \label{2.100}
\end{align}
            In particular, the first relative mean curvature's function $H_1$ is denoted by $H$ and called  relative mean curvature and the $n-$th  relative mean curvature's function $H_n$ is denoted by $K$ and called  relative (Gauss-Kronecker) curvature of \rn{}.

\section{Relatively parallel hypersurfaces in $\mathbb{R}^{n+1}$}\label{Section3}
            Let \rn{} be a relatively normalized hypersurface in the  space $\mathbb{R}^{n+1}$.
            In what follows we suppose that the relative normalization $\vect{y} = \vect{y}(u^i)$ is a $C^r$-immersion.

            We consider the one-parameter family of mappings $\vect{x}_\mu \colon M \rightarrow \mathbb{R}^{n+1}$ which are defined by
\begin{equation}\label{3.10}
            \vect{x}_\mu(u^i) = \vect{x}(u^i) + \mu \, \vect{y}(u^i),
            \end{equation}
            where $\mu$ is a real nonvanishing constant.
            From \eqref{2.80} and \eqref{3.10} we obtain
\begin{equation}\label{3.15}
            \partial_i \vect{x}_\mu  = \left(\delta^j_i - \mu \, B^j_i\right)\, \partial_j \vect{x},
\end{equation}
            where $\delta_i^j$ is the Kronecker delta.
            Then it is readily verified that the vector product of the partial derivatives $\partial_i \vect{x}_\mu $ satisfies the relation
\begin{equation*}
            \partial_1 \vect{x}_\mu \times \dotsb \times \partial_n  \vect{x}_\mu = A(\mu) \;
            \Big( \partial_1 \vect{x}   \times \dotsb \times \partial_n  \vect{x}\Big),
\end{equation*}
            where
\begin{equation}\label{3.25}
            A(\mu)\coloneqq  \det \!\left(\delta_i^j -\mu \, B_i^j \right).
\end{equation}
            We suppose throughout that $A(\mu) \neq 0$ everywhere on $M$.
            Then the one-parameter family \eqref{3.10} consists of $C^r$-immersions.

            In this way we obtain the one-parameter family
\begin{equation*}
            \mathcal{F} \eqqcolon \left \{ \, \varPhi_\mu \eqqcolon (M,\vect{x}_\mu) \mid \vect{x}_\mu = \vect{x} + \mu \, \vect{y},\, \mu \in \mathbb{R} \setminus\{0\} \,  \right  \}
\end{equation*}
            of $C^r$-hypersurfaces.
            To the point $P(u^i_0)$ of $\varPhi$ corresponds the point $P_\mu(u^i_0)$ of $\varPhi_\mu$ so that their position vectors are $\vect{x}(u^i_0)$ and  $\vect{x}_\mu(u^i_0)$, respectively.

            From \eqref{3.15} we infer that the tangent hyperplanes to each member of the family $\mathcal{F}$ and to $\varPhi$ at corresponding points are parallel; $\varPhi$ and every $\varPhi_\mu \in \mathcal{F}$ are in Peterson correspondence \cite{Chakmazyan}.

            Furthermore, as we can see immediately by using \eqref{3.10}, the relations \eqref{2.15} are valid as well if the parametrization $\vect{x}(u^i)$ of $\varPhi$ is replaced by the parametrization $\vect{x}_\mu(u^i)$ of $\varPhi_\mu \in \mathcal{F}$.
            Therefore $\vect{y}$ is a relative normalization for each member of $\mathcal{F}$.
            We call each relatively normalized hypersurface $\left( \varPhi_\mu ,\vect{y} \right)$ 
            a relatively parallel hypersurface to \rn.
            Throughout what follows, we shall freely use for $\mu$  the expression ``relative distance".

            By means of \eqref{2.35}, (\ref{2.45}a) and  \eqref{2.70} it is clear that \rn{} \emph{and every relatively parallel hypersurface $\left( \varPhi_\mu ,\vect{y} \right)$ to \rn{} have in common}
            (a) \emph{the relative image} $\overline{\varPhi}$,\\
            (b) \emph{the support function} $q$,\\
            (c) \emph{the covector $\vect{X}$ of their tangent vector spaces and}\\
            (d) \emph{the quadratic differential form \eqref{2.65}}.

            For simplicity we denote a relatively parallel hypersurface $\left( \varPhi_\mu ,\vect{y} \right)$ at a relative distance $\mu$ by \rns.
            Analogously, we mark all the corresponding quantities induced by \rns{} with  an asterisk and we refer by (\#*) to the formula, which, on this modification, flows from formula (\#).

            The second fundamental form $II^*$ of \rns{} can be calculated by using \eqref{2.11}, (\ref{2.11}*), (\ref{2.45}a), \eqref{2.65}, \eqref{3.10} and $\langle \DIFF \vect{y}, \vect{X}\rangle = 0.$ We find
\begin{equation*}
            II^* = II - \mu \,q\, B.
\end{equation*}
            By combining \eqref{2.25}, (\ref{2.25}*) and \eqref{3.15} we calculate the relative metric $G^*$ of \rns{}
\begin{equation*}
                G^* = G - \mu \, B.
\end{equation*}
            In addition to relation \eqref{3.15} we have likewise from \eqref{3.10}
\begin{equation}\label{4.25}
            \partial_i \vect{x}  = \left(\delta^j_i + \mu \, B^{*j}_{\phantom{^*}i} \right) \, \partial_j \vect{x}^*.
\end{equation}
            Then from \eqref{2.80}, (\ref{2.80}*), \eqref{3.15} and \eqref{4.25} we obtain
\begin{equation}\label{4.30}
            B^j_i = B_{\phantom{^*}i}^{*k}\,(\delta^j_k - \mu \, B^j_k),
            \qquad B^{*j}_{\phantom{^*}i} = B_i^{k} \, (\delta^j_k + \mu \, B^{*j}_{\phantom{^*}k})
\end{equation}
and
\begin{equation*}
            B^\kappa_i\, B_{\phantom{^*}\kappa}^{*j} = B^{*\kappa}_{\phantom{^*}i} \, B_\kappa^j.
\end{equation*}
In the following sections we shall discuss relatively parallel hypersurfaces in the  space $\mathbb{R}^4$.

\section{Relatively parallel hypersurfaces in $\mathbb{R}^4$}\label{Section5}
                The relative principal curvatures $\kappa_1,\kappa_2$ and $\kappa_3$ of a relatively normalized hypersurface \rn{} in $\mathbb{R}^4$, are the roots of the characteristic polynomial
\begin{equation*}
                P_\omega(\kappa) = \det \! \left( B_i^j - \kappa \,\delta_i^j \right), \quad i,j = 1,2,3,
\end{equation*}
            of the shape operator $\omega$, or, what is the same, the roots of the equation
\begin{equation*}
            \det \! \left( B_{ij} - \kappa \,G_{ij} \right) = 0.
\end{equation*}
            Consequently we have
\begin{align}
            H    &= \frac{1}{3} \TR \! \left( B_i^j \right),           \label{5.35} \\
            H_2  &= \frac{1}{3} \left(B_1^1 \, B_2^2 + B_2^2\, B_3^3 + B_3^3 \, B_1^1 - B_1^2\, B_2^1 - B_2^3 \, B_3^2 - B_1^3 \,B_3^1   \right),       \label{5.40}\\
            K    &= \det \! \left( B_i^j \right)                       \label{5.45}.
\end{align}
            We consider a relatively parallel hypersurface \rns{} to \rn{} at a relative distance $\mu$.
            We recall the function $A(\mu)$ which is defined by \eqref{3.25}. In our case ($n = 3$) we have
\begin{equation}\label{5.50}
                A(\mu)  =  -\mu^3 \,K + 3 \mu^2 \, H_2  - 3 \mu \, H  + 1.
\end{equation}
            We notice that when the relative curvature $K$ of \rn{} does not vanish, $A(\mu)$ can be written by means of the relative principal radii of curvature $R_1,R_2,R_3$ of \rn{} as follows
\begin{equation}\label{5.55}
                A(\mu) = -K \left(\mu - R_1 \right) \left(\mu - R_2 \right) (\mu - R_3).
\end{equation}
            Hence, in case $K \neq 0$, since $A(\mu) \neq 0$, we have $\mu \neq R_1,R_2,R_3$.
            We solve now the system (\ref{4.30}a) (or (\ref{4.30}b))  with respect to the mixed components of the shape operator $\omega^*$ of \rns{} and we find
            \begin{align*}
            B_{\phantom{^*}1}^{*1} &= \frac{B_1^1 - \mu \left(B_1^1B_2^2 + B_1^1B_3^3 - B_1^2B_2^1 - B_1^3B_3^1\right) + \mu^2 K}{A(\mu)}, \\
            B_{\phantom{^*}1}^{*2} &= \frac{B_1^2 + \mu \left(B_1^3B_3^2 - B_1^2B_3^3\right)}{A(\mu)}, \\
            B_{\phantom{^*}1}^{*3} &= \frac{B_1^3 + \mu \left(B_1^2B_2^3 - B_1^3B_2^2\right)}{A(\mu)}, \\
            B_{\phantom{^*}2}^{*1} &= \frac{B_2^1 + \mu \left(B_2^3B_3^1 - B_2^1B_3^3\right)}{A(\mu)}, \\
            B_{\phantom{^*}2}^{*2} &= \frac{B_2^2 - \mu \left(B_1^1B_2^2 + B_2^2B_3^3 - B_2^3B_3^2 - B_1^2B_2^1\right) + \mu^2 K}{A(\mu)}, \\
            B_{\phantom{^*}2}^{*3} &= \frac{B_2^3 + \mu \left(B_2^1B_1^3 - B_2^3B_1^1\right)}{A(\mu)}, \\
            B_{\phantom{^*}3}^{*1} &= \frac{B_3^1 + \mu \left(B_3^2B_2^1 - B_3^1B_2^2\right)}{A(\mu)},\\
            B_{\phantom{^*}3}^{*2} &= \frac{B_3^2 + \mu \left(B_3^1B_1^2 - B_3^2B_1^1\right)}{A(\mu)}, \\
            B_{\phantom{^*}3}^{*3} &= \frac{B_3^3 - \mu \left(B_1^1B_3^3 + B_2^2B_3^3 - B_1^3B_3^1 - B_2^3B_3^2\right) + \mu^2 K}{A(\mu)}.
            \end{align*}
            From (\ref{5.35}*)--(\ref{5.45}*) we obtain the relative mean curvature's functions of \rns{}:
\begin{align}
            K^* & = \frac{K} {A(\mu)},                                   \label{5.60}\\
            H^*_{2} & = \frac{ - \mu \, K + H_2 } {A(\mu)},              \label{5.65}\\
            H^* & =  \frac{\mu^2 \, K  - 2 \mu \, H_2  + H} {A(\mu)}.    \label{5.70}
\end{align}
            Then, taking into account (\ref{2.90})--(\ref{2.100}), (\ref{2.90}*)--(\ref{2.100}*) and   \eqref{5.60}--\eqref{5.70}, we find
\begin{equation}\label{5.75}
            \kappa_i^* = \frac{\kappa_i}{1 - \mu \, \kappa_i}, \quad i = 1,2,3.
\end{equation}
            Hence for $\kappa_i \neq 1/\mu$ we have $\kappa_i^* = 0$ \Iff{} $\kappa_i = 0$ and for the relative principal radii of curvature $R_i^*$ of \rns{} we have
\begin{equation}\label{5.76}
            R_i^* = R_i - \mu.
\end{equation}
            The relations \eqref{5.75} and \eqref{5.76} agree with formulae (n) of \cite[p.~117]{Opozda}, albeit obtained with a quite different way.

            In the remainder of this section we suppose that the relative curvature $K$ of the relatively hypersurface \rn{} does not vanish.

            On account of \eqref{2.90}--\eqref{2.100}, for $n = 3$ we obtain
\begin{equation}\label{5.15}
            R_1 + R_2 + R_3 = \frac{3H_2}{K}.
\end{equation}

            Formula \eqref{5.76} combined with (\ref{5.15}*) allows us to prove the following proposition:
\begin{proposition}
            Let \rn{} be a relatively normalized hypersurface in the  space $\mathbb{R}^4$ with nonvanishing constant sum of its relative principal radii of curvature and nonvanishing relative curvature $K$. Then every relatively parallel hypersurface to \rn{} has also constant sum of its relative radii of curvature and there is exactly one relatively parallel hypersurface to \rn{} with vanishing second relative mean curvature.
\end{proposition}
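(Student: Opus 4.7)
The plan is to exploit the translation formula \eqref{5.76} for the relative radii of curvature together with the identity \eqref{5.15} applied to the parallel hypersurface. Everything else reduces to a single linear equation in $\mu$.

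First I would prove the assertion about the constancy of the sum. By \eqref{5.76} the relative principal radii of $(\varPhi^*,\vect{y})$ satisfy $R_i^*=R_i-\mu$, whence
\begin{equation*}
R_1^*+R_2^*+R_3^*=(R_1+R_2+R_3)-3\mu.
\end{equation*}
Since $\mu$ is a constant and $R_1+R_2+R_3$ is a constant by hypothesis, the left-hand side is a constant as a function on $M$ for every admissible $\mu$. This gives the first claim.

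Next, for the uniqueness statement, I would apply (\ref{5.15}*) to the relatively parallel hypersurface $(\varPhi^*,\vect{y})$, obtaining
\begin{equation*}
R_1^*+R_2^*+R_3^*=\frac{3H_2^*}{K^*}.
\end{equation*}
By \eqref{5.60} and the standing assumption $A(\mu)\neq 0$, the relative curvature $K^*=K/A(\mu)$ never vanishes (since $K\neq 0$). Hence $H_2^*\equiv 0$ if and only if $R_1^*+R_2^*+R_3^*\equiv 0$, which, in view of the displayed identity above, is equivalent to
\begin{equation*}
\mu=\tfrac{1}{3}(R_1+R_2+R_3)=\frac{H_2}{K},
\end{equation*}
where the last equality uses \eqref{5.15}. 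This value of $\mu$ is a well-defined real number (since $R_1+R_2+R_3$ is constant), and it is nonzero precisely because the sum is assumed nonvanishing. Therefore at most one value of $\mu$ can produce $H_2^*=0$, and this value does produce it.

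The main subtlety is checking that the singled-out value $\mu_0=H_2/K$ actually determines a bona fide relatively parallel hypersurface, i.e. that $A(\mu_0)\neq 0$ on $M$; by \eqref{5.55} this amounts to $\mu_0\neq R_i$ at every point, which is the implicit admissibility assumption of the section. Granting this, the uniqueness and existence of the relatively parallel hypersurface with $H_2^*\equiv 0$ follow, completing the proof.
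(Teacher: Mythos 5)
Your proof is correct and follows essentially the same route as the paper: the first claim from \eqref{5.76}, and the second by applying (\ref{5.15}*) at $\mu=\tfrac{1}{3}(R_1+R_2+R_3)=H_2/K$, with uniqueness coming from the equivalence $H_2^*=0\Leftrightarrow \mu=H_2/K$ (the paper cites \eqref{5.65} for this, which is the same computation). Your explicit remark that one must check $A(\mu_0)\neq 0$ is a point the paper leaves to its standing assumption, but otherwise the arguments coincide.
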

\begin{proof}
            The first part of the proposition follows from \eqref{5.76}. Put
\begin{equation*}
            R_1 + R_2 + R_3 = 3\,c = const., \quad c \neq 0.
\end{equation*}
            For the relatively parallel hypersurface \rns{} at the relative distance $\mu = c$ we have
\begin{equation*}
            R^*_1 + R^*_2 + R^*_3 = 0,
\end{equation*}
            which on account of (\ref{5.15}*) yields $H^*_2 = 0$. The uniqueness of $\mu$ follows from \eqref{5.65}.
\end{proof}
\begin{remark}
            When $R_1 + R_2 + R_3 = 0$, or equivalently $H_2 = 0$, it follows from \eqref{5.60} and \eqref{5.65} 
            $$
            H^*_2 / K^* = -\mu.
            $$
            Hence the sum of the relative radii of curvature of every relatively parallel hypersurface to \rn{} is constant, but there is not any relatively parallel hypersurface with vanishing second relative mean curvature.
\end{remark}
            Moreover, we find from \eqref{5.70}:

            (a) \emph{When} $H_2 = 0$, \emph{there are at most two relatively parallel to} \rn{} \emph{hypersurfaces which are relatively minimal.}

            (b)  \emph{When}
\begin{equation*}
            H_2^2 - K H \, \geq 0 \quad \text{\emph{and}} \quad \frac{H_2 +(-1)^i \sqrt{H_2^2 - K  H}}{K} = c_i = const. \neq 0, \quad i = 1,2,
\end{equation*}
            \emph{then the relatively parallel to} \rn{} \emph{hypersurface at the relative distance}
            $\mu = c_1$ (\emph{or} $\mu = c_2$) \emph{is relatively minimal}.

            (c) \emph{When the conditions}
            $$H_2 / K = c_3 \neq 0 \quad \text{\emph{and}} \quad H_2^2 = K H$$
            \emph{are valid, then the unique relatively parallel to} \rn{} \emph{hypersurface at the relative distance} $\mu = c_3$, \emph{which has vanishing second relative mean curvature, is relatively minimal.}

            By using \eqref{5.60}--\eqref{5.70} we obtain
\begin{equation*}
            H^{*2}_{2} -  K^*  H^* = \frac{H^{2}_{2} -  K  H}{A(\mu)^2},
\end{equation*}
            which 
            may be rewritten as
\begin{equation*}
            \frac {H^{*2}_{2} -  K^* H^*}{ K^{*2}} =  \frac{H_2^2 - K H}{K^2}
\end{equation*}
            and we arrive at the following
\begin{proposition}
            Let \rn{} be a relatively normalized hypersurface in the  space $\mathbb{R}^4$ with nonvanishing relative curvature $K$. Then the function
\begin{equation*}
            \frac{H_2^2 - K H}{K^2}
\end{equation*}
            remains invariant by the transition to anyone of the relatively parallel hypersurfaces to \rn.
\end{proposition}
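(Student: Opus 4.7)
The plan is to verify the invariance by direct substitution: the relative mean curvature's functions $K^*$, $H_2^*$, $H^*$ of a relatively parallel hypersurface are already expressed in terms of $K, H_2, H$ and the function $A(\mu)$ through formulae \eqref{5.60}--\eqref{5.70}. So I would simply plug these into the numerator $H_2^{*2}-K^*H^*$ and into the denominator $K^{*2}$, simplify, and check that the result coincides with $(H_2^2-KH)/K^2$.

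Concretely, first I would write
\begin{equation*}
H_2^{*2} = \frac{(H_2-\mu K)^2}{A(\mu)^2} = \frac{H_2^2 - 2\mu K H_2 + \mu^2 K^2}{A(\mu)^2},
\end{equation*}
and
\begin{equation*}
K^*H^* = \frac{K(\mu^2 K - 2\mu H_2 + H)}{A(\mu)^2} = \frac{\mu^2 K^2 - 2\mu K H_2 + KH}{A(\mu)^2}.
\end{equation*}
Subtracting, the $\mu^2 K^2$ and $-2\mu K H_2$ terms cancel, leaving
\begin{equation*}
H_2^{*2} - K^* H^* = \frac{H_2^2 - KH}{A(\mu)^2}.
\end{equation*}

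Then dividing by $K^{*2} = K^2/A(\mu)^2$ (which is nonzero: $K\neq 0$ by hypothesis and $A(\mu)\neq 0$ by the standing assumption made in Section~3), the factor $A(\mu)^2$ cancels and one obtains the claimed identity
\begin{equation*}
\frac{H_2^{*2}-K^*H^*}{K^{*2}} \;=\; \frac{H_2^2 - KH}{K^2}.
\end{equation*}

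I do not expect any genuine obstacle here; the proposition is essentially a bookkeeping corollary of the explicit formulae \eqref{5.60}--\eqref{5.70}, and the only thing to observe is the convenient cancellation of the mixed terms in the numerator, which is what allows the invariance to appear in the particular rational combination $(H_2^2 - KH)/K^2$ rather than in $H_2^2 - KH$ itself (that latter quantity transforms with a factor $A(\mu)^{-2}$, so it is only a relative, not an absolute, invariant).
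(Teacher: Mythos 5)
Your proposal is correct and follows exactly the paper's own argument: substitute the expressions \eqref{5.60}--\eqref{5.70} to get $H_2^{*2}-K^*H^*=(H_2^2-KH)/A(\mu)^2$ and then divide by $K^{*2}=K^2/A(\mu)^2$ so that the factor $A(\mu)^2$ cancels. The explicit cancellation of the mixed terms that you exhibit is the only computation involved, and your remark that $H_2^2-KH$ alone is merely a relative invariant matches the paper's presentation.
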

            Then, from \eqref{5.60} and \eqref{5.65}, we take
\begin{equation}\label{5.95}
            \mu = \frac{H_2}{K} - \frac{H^*_2}{K^*},
\end{equation}
i.e. \emph{the function}
\begin{equation*}
            \frac{H_2}{K} - \frac{H^*_2}{K^*}
\end{equation*}
            \emph{is independent of the point} $P(u^i) \in \varPhi$ \emph{for every relatively parallel hypersurface to} \rn {}.

            Substituting $\mu$ from \eqref{5.95} in \eqref{5.70} we obtain
\begin{equation*}
\begin{split}
            K K^*  H_2^* \bigg[K^2  H_2^* + 3 K^* H^* \Big(H_2^2 - K H \Big) \bigg] - K^3 H_2^{*3}  H^*  = \\
            K^{*3} \bigg[ 2H_2^3  H^* + K  H_2 \Big(H_2 - 3H  H^* \Big) + K^2 \Big( H^* - H\Big)\bigg].
\end{split}
\end{equation*}
\section{Bonnet's type theorems}\label{Section6}
            We can now formulate the theorems to which this article is dedicated, namely the Bonnet's type theorems for relatively parallel hypersurfaces in the  space $\mathbb{R}^4$.
\begin{proposition}\label{Prop61}
            Let \rn{} be a relatively normalized hypersurface in the  space $\mathbb{R}^4$ with  constant  relative curvature and constant second relative mean curvature such that
\begin{equation*}
            K \neq 0 \quad \text{and} \quad             K^2 \geq H^{3}_{2}.
\end{equation*}
            Then there is a relatively parallel hypersurface to \rn{} which has constant relative mean curvature.
\end{proposition}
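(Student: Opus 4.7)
The strategy is to impose the constancy condition $H^* \equiv c$ on formula \eqref{5.70} and extract an algebraic equation for the relative distance $\mu$. Since $K$ and $H_2$ are assumed constant on $M$, the only pointwise datum appearing in the numerator and denominator of \eqref{5.70} is $H$. Clearing $A(\mu)$ in the equation $H^* = c$ gives
\begin{equation*}
\mu^2 K - 2\mu H_2 + H \;=\; c\,\bigl(-\mu^3 K + 3\mu^2 H_2 - 3\mu H + 1\bigr),
\end{equation*}
and collecting the terms that contain $H$ yields
\begin{equation*}
(1 + 3c\mu)\,H \;=\; c\,\bigl(-\mu^3 K + 3\mu^2 H_2 + 1\bigr) - \bigl(\mu^2 K - 2\mu H_2\bigr).
\end{equation*}
Because $H$ is a priori non-constant on $M$ while every other quantity is constant, the coefficient on the left must vanish. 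This fixes $c = -1/(3\mu)$ and forces the right-hand side to vanish as well, which after a short simplification reduces to the single cubic condition
\begin{equation*}
f(\mu) \;\coloneqq\; 2K\,\mu^3 - 3H_2\,\mu^2 + 1 \;=\; 0.
\end{equation*}

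The next step is to exhibit a real, nonzero root of $f$ under the hypothesis $K \neq 0$ and $K^2 \geq H_2^3$. Since $f(0) = 1$, any root is nonzero and we may pass to $t = 1/\mu$, which turns $f(\mu) = 0$ into the depressed cubic
\begin{equation*}
t^3 - 3H_2\,t + 2K \;=\; 0.
\end{equation*}
The Cardano quantity for this cubic is $(2K/2)^2 + (-3H_2/3)^3 = K^2 - H_2^3 \geq 0$ by hypothesis, so Cardano's formula provides the explicit real root
\begin{equation*}
t_0 \;=\; \sqrt[3]{-K + \sqrt{K^2 - H_2^3}} \,+\, \sqrt[3]{-K - \sqrt{K^2 - H_2^3}},
\end{equation*}
and $t_0 \neq 0$, for otherwise $2K = 0$ contradicting $K \neq 0$. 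Setting $\mu_0 \coloneqq 1/t_0$, I obtain a real, nonzero root of $f$.

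Finally, the standing assumption $A(\mu_0) \neq 0$ on $M$ (imposed throughout Section~\ref{Section3}) is in force for the relatively parallel hypersurface $(\varPhi_{\mu_0}^{},\vect{y})$, so formula \eqref{5.70} evaluated at $\mu = \mu_0$ gives
\begin{equation*}
H^*(u^i) \;=\; -\,\frac{1}{3\mu_0} \qquad \text{for every } u^i \in M,
\end{equation*}
which is the desired constancy. The only conceptual obstacle is the initial algebraic observation that $H^* \equiv c$ (with $H$ allowed to vary on $M$) decouples into the two independent conditions $1 + 3c\mu = 0$ and a residual polynomial identity, collapsing the problem to the cubic $f(\mu) = 0$ whose real solvability via real cube roots is controlled precisely by the stated hypothesis $K^2 \geq H_2^3$; the rest is routine.
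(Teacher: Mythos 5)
Your proposal is correct and follows essentially the same route as the paper: both arguments reduce the problem to the cubic $2K\mu^3 - 3H_2\mu^2 + 1 = 0$ (the paper obtains it as the identity $A(\mu)\bigl(1+3\mu H^*\bigr)=P(\mu)$ in which $H$ cancels, you obtain it by separating the $H$-terms in $H^*\equiv c$), then extract a real root by Cardano under $K^2\geq H_2^3$ and conclude $H^*=-1/(3\mu_0)$. Your substitution $t=1/\mu$ to reach a depressed cubic is only a cosmetic variation on the paper's explicit root formula \eqref{6.20}.
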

\begin{proof}
            By virtue of \eqref{5.50} and \eqref{5.70} we have
\begin{equation}\label{6.10}
            A(\mu) \big(1 + 3\mu \, H^* \big) = P(\mu),
\end{equation}
where $$P(\mu) = 2 \mu^3 K - 3 \mu^2 \, H_2 + 1.$$
            The only real root of the polynomial $P(\mu)$ is
\begin{equation}\label{6.20}
            \mu_1 = -\frac{1}{2K}\left[W + H_2 \left(\frac{H_2}{W} -1  \right) \right],
\end{equation}
            where
\begin{equation*}
             W = \left[ 2K^2 -H_2^3 + 2\left| K \right| \sqrt{K^2 - H_2^3} \right]^{1/3}.
\end{equation*}
            By a straightforward calculation we get $A(\mu_1) \neq 0$. From \eqref{6.10} we see that $1 + 3\mu_1 \, H^*$ must vanish. Hence
\begin{equation*}
            H^* = -\frac{1}{3\mu_1} = const. \qedhere
\end{equation*}
\end{proof}
\begin{remark}
            In the special case where $$K^2 = H^{3}_{2},$$ we have from \eqref{6.20} $$\mu_1 = -1 / \left( 2K^{1/3}\right),$$ whereupon  $$H^* = 2K^{1/3}/3.$$
\end{remark}
\begin{proposition}\label{Prop63}
            Let \rn{} be a relatively normalized hypersurface in the  space $\mathbb{R}^4$ with  constant relative curvature and constant relative mean curvature.\\
            (a) If
\begin{equation*}
            K \neq 0 \quad \text{and} \quad K(K - 2H^3) \geq 0,
\end{equation*}
            then there is a relatively parallel hypersurface to \rn{} which has constant second relative mean curvature.\\
            (b) If
\begin{equation*}
            K \neq 0 \quad \text{and} \quad K(K - H^3) \geq 0,
\end{equation*}
            then there is a relatively parallel hypersurface to \rn{} which has constant relative mean curvature.
\end{proposition}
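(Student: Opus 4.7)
The plan is to imitate the proof of Proposition~\ref{Prop61}, constructing in each part an algebraic identity of the form
$A(\mu)\bigl(\alpha+\beta\,\mu^{k}\,F^{*}\bigr)=C(\mu)$,
where $F^{*}\in\{H_{2}^{*},H^{*}\}$, the constants $\alpha,\beta$ are nonzero, and $C(\mu)$ is a cubic in $\mu$ whose coefficients depend only on the hypothesised constants $K$ and $H$. Any real root $\mu_{0}$ of $C$ at which $A(\mu_{0})\neq 0$ then forces $F^{*}=-\alpha/(\beta\mu_{0}^{k})=\mathrm{const}$, and the real-root condition for $C$, read through Cardano's formula, will reproduce the inequalities imposed in (a) and (b).

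For part (a), solving \eqref{5.65} as $H_{2}=H_{2}^{*}\,A(\mu)+\mu K$ and substituting into \eqref{5.50} makes the non-constant term $3\mu^{2}H_{2}$ cancel against $3\mu^{3}K$, leaving
\begin{equation*}
    A(\mu)\bigl(1-3\mu^{2}H_{2}^{*}\bigr)=2K\mu^{3}-3H\mu+1\;=:\;Q(\mu).
\end{equation*}
A direct computation gives $\Delta(Q)=-108\,K(K-2H^{3})$, so the hypothesis $K(K-2H^{3})\geq 0$ is exactly the condition $\Delta(Q)\leq 0$ under which $Q$ admits a real root $\mu_{0}$ expressible in real radicals via Cardano. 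Since $Q(0)=1\neq 0$ we have $\mu_{0}\neq 0$; a verification parallel to the one following \eqref{6.20} delivers $A(\mu_{0})\neq 0$, and then $H_{2}^{*}=1/(3\mu_{0}^{2})$ is constant.

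Part (b) proceeds in the same spirit: solving \eqref{5.50} for $3\mu^{2}H_{2}$ and feeding the expression into \eqref{5.70} produces
\begin{equation*}
    A(\mu)\bigl(2+3\mu H^{*}\bigr)=K\mu^{3}-3H\mu+2\;=:\;R(\mu),
\end{equation*}
whose discriminant equals $-108\,K(K-H^{3})$, so $K(K-H^{3})\geq 0$ is precisely the requirement for Cardano to yield a real root $\mu_{2}$. Again $R(0)=2\neq 0$ forces $\mu_{2}\neq 0$, the analogous verification gives $A(\mu_{2})\neq 0$, and we conclude $H^{*}=-2/(3\mu_{2})=\mathrm{const}$.

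\emph{Main obstacle.} The substitutions are mechanical; the genuinely delicate step, as already in Proposition~\ref{Prop61}, is certifying that the chosen real root $\mu_{i}$ of the auxiliary cubic does not make $A$ vanish anywhere on $M$. I expect this to reduce, by computing $A(\mu_{i})$ modulo the cubic relation satisfied by $\mu_{i}$, to the same elementary check that the authors dispose of as ``straightforward'' after formula \eqref{6.20}; a secondary annoyance is to state the conclusions so that the excluded boundary cases (the double-root situations $K=2H^{3}$ in (a) and $K=H^{3}$ in (b)) are handled uniformly, presumably by an analogue of the Remark following Proposition~\ref{Prop61}.
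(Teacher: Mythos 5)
Your proposal reproduces the paper's proof essentially verbatim: the identities $A(\mu)\bigl(1-3\mu^{2}H_{2}^{*}\bigr)=2K\mu^{3}-3H\mu+1$ and $A(\mu)\bigl(2+3\mu H^{*}\bigr)=K\mu^{3}-3H\mu+2$ are exactly the paper's \eqref{6.30} and \eqref{6.50}, and the conclusion is drawn in the same way from a real root of the right-hand cubic (given by Cardano's formula, the stated inequalities making the radicand nonnegative) at which $A$ does not vanish. The extra material you supply --- the discriminant computations interpreting the hypotheses and the explicit derivation of the two identities from \eqref{5.50}, \eqref{5.65} and \eqref{5.70} --- is correct but does not alter the route.
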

\begin{proof}
            (a) From \eqref{5.50} and \eqref{5.65} we have
\begin{equation}\label{6.30}
            A(\mu) \big(1 - 3\mu^2 \, H_2^*\big) = P(\mu),
\end{equation}
where $$P(\mu) = 2 \mu^3 K - 3 \mu \, H + 1$$.
            The only real root of the polynomial $P(\mu)$  is
\begin{equation}\label{6.40}
            \mu_2 = -\frac{H}{\sqrt[3]{2}\,\, W} - \frac{W}{\sqrt[3]{4}\,\,K},
\end{equation}
            where
\begin{equation*}
            W = \left[ K^2 + \sqrt{K^3 \left(K - 2H^3 \right)} \right]^{1/3}.
\end{equation*}
            Because of $A(\mu_2) \neq 0$, we have $$1 - 3\mu_2^2 \, H_2^* = 0$$ (see \eqref{6.30}). Hence
\begin{equation*}
            H_2^* = \frac{1}{3\mu_2^2} = const.
\end{equation*}
            (b) From \eqref{5.50} and \eqref{5.70} we have
\begin{equation}\label{6.50}
            A(\mu) \big(2 + 3\mu \, H^* \big) = P(\mu),
\end{equation}
where $$P(\mu) = \mu^3 K - 3 \mu \, H + 2$$.
            The only real root of the polynomial $P(\mu)$  is
\begin{equation}\label{6.60}
            \mu_3 = -\frac{H}{W} - \frac{W}{K},
\end{equation}
            where
\begin{equation*}
             W = \left[ K^2 + \sqrt{K^3 \left(K - H^3 \right)} \right]^{1/3}.
\end{equation*}
            By virtue of $A(\mu_3) \neq 0$, from \eqref{6.50} we take $$2 + 3\mu_3 \, H^* = 0.$$ Hence
\begin{equation*}
            H^* = -\frac{2}{3\mu_3} = const.             \qedhere
\end{equation*}
\end{proof}
\begin{remark}
            (a) In the special case where $$K = 2H^3,$$ we have from \eqref{6.40} $$\mu_2 = -1 / H,$$ whereupon  we find $$H_2^* = H^2 / 3.$$\\
            (b) In the special case where $$K = H^3,$$  we have from \eqref{6.60} $$\mu_3 = -2 / H,$$ whereupon $$H^* = H / 3$$. Beyond this,  
            we can see that the parallel hypersurface to \rn{} at the relative distance $$\mu_3 = 1 / H$$ possesses the constant mean curvature $$H^* = -2H / 3.$$
\end{remark}
\begin{proposition}\label{Prop65}
            Let \rn{} be a relatively normalized hypersurface in the  space $\mathbb{R}^4$ with  constant second relative mean curvature and constant relative mean curvature.\\
            (a) If
\begin{equation*}
            H_2 \neq 0 \quad \text{and} \quad 3H^2 \geq 4 H_2,
\end{equation*}
            then there are two relatively parallel hypersurfaces to \rn{} which have constant relative curvature.\\
            (b) If
\begin{equation*}
            H_2 \neq 0 \quad \text{and} \quad 9H^2 \geq 8H_2,
\end{equation*}
            then there are two relatively parallel hypersurfaces to \rn{} which have constant second relative mean curvature.\\
            (c) If
\begin{equation*}
            H_2 \neq 0 \quad \text{and} \quad H^2 \geq H_2,
\end{equation*}
            then there are two relatively parallel hypersurfaces to \rn{} which have constant relative  mean curvature.
\end{proposition}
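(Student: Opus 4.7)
My plan is to follow the unified template already used for Propositions \ref{Prop61} and \ref{Prop63}: combine the appropriate formula among \eqref{5.60}, \eqref{5.65}, \eqref{5.70} with the expression \eqref{5.50} for $A(\mu)$ so as to eliminate the (pointwise varying) relative curvature $K$. The goal is, in each case, to derive an identity of the form
\[
A(\mu)\,L(\mu,C^{*})\;=\;Q(\mu),
\]
where $C^{*}$ is the starred curvature we wish to force constant, $L$ is linear in $C^{*}$, and $Q(\mu)$ is a polynomial whose coefficients depend only on the given constants $H$ and $H_{2}$. Any real root $\mu_{*}$ of $Q$ with $A(\mu_{*})\neq 0$ then forces $L(\mu_{*},C^{*})=0$ identically on $M$, yielding the desired relatively parallel hypersurface at relative distance $\mu_{*}$. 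Since in every case $Q$ will turn out to be quadratic, its (at most) two real roots will correspond to the ``two'' relatively parallel hypersurfaces asserted in each part.

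Executing the elimination is short in each case. For (a) I would substitute $K=K^{*}A(\mu)$ from \eqref{5.60} into \eqref{5.50} to obtain
\[
A(\mu)\bigl(1+\mu^{3}K^{*}\bigr)=3H_{2}\,\mu^{2}-3H\,\mu+1.
\]
For (b) I would solve \eqref{5.65} for $\mu K$ and plug $\mu^{3}K$ back into the identity $A(\mu)+\mu^{3}K=3H_{2}\mu^{2}-3H\mu+1$, getting
\[
A(\mu)\bigl(1-\mu^{2}H_{2}^{*}\bigr)=2H_{2}\,\mu^{2}-3H\,\mu+1.
\]
For (c) I would similarly solve \eqref{5.70} for $\mu^{2}K$ and substitute, producing
\[
A(\mu)\bigl(1+\mu H^{*}\bigr)=H_{2}\,\mu^{2}-2H\,\mu+1.
\]
Each right-hand side is a genuine quadratic in $\mu$ thanks to the hypothesis $H_{2}\neq 0$, with discriminants $3(3H^{2}-4H_{2})$, $9H^{2}-8H_{2}$, and $4(H^{2}-H_{2})$ respectively, matching the three sign conditions in the statement exactly. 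Hence under each hypothesis $Q$ admits two real roots $\mu_{*}$, both automatically nonzero since $Q(0)=1$, and the bracket equation immediately reads off the constant starred curvature, namely $K^{*}=-1/\mu_{*}^{3}$ in (a), $H_{2}^{*}=1/\mu_{*}^{2}$ in (b), and $H^{*}=-1/\mu_{*}$ in (c).

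The step I expect to require the most care, and which plays the same role as the terse ``by a straightforward calculation we get $A(\mu_{1})\neq 0$'' in the proof of Proposition \ref{Prop61}, is the verification that $A(\mu_{*})\neq 0$ at each chosen root; this is needed both to cancel $A(\mu_{*})$ in the derivation above and for $(\varPhi_{\mu_{*}},\vect{y})$ to be a bona fide relatively parallel hypersurface. Using $Q(\mu_{*})=0$, the quantity $A(\mu_{*})$ reduces in each case to a short expression in $K$, $H_{2}$ and $\mu_{*}$; for instance in (a) one finds immediately $A(\mu_{*})=-\mu_{*}^{3}K$, so the condition collapses to the nonvanishing of $K$ already implicit in the running hypotheses of Section \ref{Section3}. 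The parallel simplifications in (b) and (c) are analogous, and with them in hand the proof is complete.
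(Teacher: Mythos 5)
Your proposal follows exactly the paper's route: the three identities $A(\mu)\bigl(1+\mu^{3}K^{*}\bigr)=3H_{2}\mu^{2}-3H\mu+1$, $A(\mu)\bigl(1-\mu^{2}H_{2}^{*}\bigr)=2H_{2}\mu^{2}-3H\mu+1$, $A(\mu)\bigl(1+\mu H^{*}\bigr)=H_{2}\mu^{2}-2H\mu+1$ are precisely the paper's \eqref{6.70}--\eqref{6.90}, the roots and discriminants agree with \eqref{6.100}--\eqref{6.120}, and the conclusions $K^{*}=-1/\mu_{*}^{3}$, $H_{2}^{*}=1/\mu_{*}^{2}$, $H^{*}=-1/\mu_{*}$ are identical. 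The only divergence is your attempted verification of $A(\mu_{*})\neq 0$: the paper simply invokes it (it is the standing assumption of Section \ref{Section3}), and your claim that cases (b) and (c) are ``analogous'' to (a) is not quite accurate, since there one gets $A(\mu_{*})=\mu_{*}^{2}(H_{2}-\mu_{*}K)$ and $A(\mu_{*})=-\mu_{*}(\mu_{*}^{2}K-2\mu_{*}H_{2}+H)$ respectively, which do not reduce to the nonvanishing of $K$ alone.
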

\begin{proof}
            From \eqref{5.55} and \eqref{5.60}--\eqref{5.70} we have
\begin{align}
            A(\mu) \big(1 + \mu^3  K^* \big)   & = P_1(\mu),        \label{6.70}\\
            A(\mu) \big(1 - \mu^2  H_2^* \big) & = P_2(\mu),  \label{6.80}\\
            A(\mu) \big(1 + \mu \, H^* \big)   & = P_3(\mu), ,     \label{6.90}
            \end{align}
where
\begin{align*}
P_1(\mu) &= 3\mu^2 H_2 - 3 \mu \, H + 1, \\
  P_2(\mu)& = 2\mu^2 H_2 - 3 \mu \, H + 1,    \\
    P_3(\mu)& = \mu^2 H_2 - 2 \mu \, H + 1.
\end{align*}
 For the roots
\begin{alignat}{2}
            \mu_i &= \frac{3H +(-1)^i \sqrt{3} \sqrt{3H^2 - 4H_2}}{6H_2}, \quad && i =4,5, \label{6.100} \\
            \mu_i &= \frac{3H +(-1)^i \sqrt{9H^2 - 8H_2}}{4H_2}, \quad && i =6,7, \label{6.110} \\
            \mu_i &= \frac{H +(-1)^i \sqrt{H^2 - H_2}}{H_2}, \quad && i =8,9, \label{6.120}
\end{alignat}
            of the polynomials $P_1(\mu),P_2(\mu),P_3(\mu)$, respectively, we have $A(\mu_i) \neq 0$, $i = 4, \dots 9$. Hence, by means of \eqref{6.70}--\eqref{6.90}
\begin{alignat*}{2}
            K^*   &= -\frac{1}{\mu_i^3} = const., \quad && i = 4,5, \\
            H_2^* &=  \phantom{-}\frac{1}{\mu_i^2} = const., \quad && i = 6,7 \\
            H^*   &= -\frac{1}{\mu_i}   = const., \quad && i = 8,9.  \qedhere
\end{alignat*}
\end{proof}
\begin{remark}
            (a) If $$3H^2 = 4H_2$$ we have from \eqref{6.100} $$\mu_4 = \mu_5 = 2/(3H).$$ In this case there is one relatively parallel hypersurface to \rn{} which has constant relative curvature $$K^* = -27H^3/8.$$\\
            (b) If $$9H^2 = 8H_2$$ we have from \eqref{6.110} $$\mu_6 = \mu_7  = 2/(3H).$$ In this case there is one relatively parallel hypersurface to \rn{} which has constant second relative  mean curvature $$H_2^* = 9H^2/4.$$\\
            (c) If $$H^2 = H_2$$ we have from \eqref{6.120} $$\mu_8 = \mu_9  = 1/H.$$ In this case there is one relatively parallel hypersurface to \rn{} which has constant relative mean curvature $$H^* = -H.$$
\end{remark}


\begin{thebibliography}{99}
\bibitem{Bonnet}
                O. Bonnet,
                \emph {Sur une propri\'{e}t\'{e} de maximum relative \`{a} la sph\`{e}re},
                Nouv. ann. de math. {12} (1853), 433--38.
\bibitem{Chakmazyan}
                A. V. Chakmazyan,
                \emph {On hyperbands of affine space in Peterson correspondence},
                Izv. VUZ. Mathematica {42} (1998), 67--73.
\bibitem{Kreyszig}
                E. Kreyszig,
                \emph {Differential Geometry},
                 Mathematical Expositions, No. 11, University of Toronto Press, Toronto, 1959.
\bibitem{fM89}
                F. Manhart,
                \emph {Relativgeometrische Kennzeichnungen Euklidischer Hypersph\"{a}ren},
                Geom. Dedicata {29} (1989), 193--207.
\bibitem{Opozda}
                B. Opozda and U. Simon,
                \emph {Paralell hypersurfaces},
                Ann. Polon. Math. {111} (2014), 107--135.
\bibitem{Schirokow}
                P.~A. Schirokow and A.~P. Schirokow,
                \emph {Affine Differentialgeometrie},
                B.~G. Teubner Verlagsgesellschaft, Leipzig, 1962.
\bibitem{Simon1991}
                U. Simon and A. Schwenk-Schellschmidt and H. Viesel,
                \emph {Introduction to the Affine Differential Geometry of Hypersurfaces},
                Lecture Notes, Science University  Tokyo, 1991.
\bibitem{Stamatakis6}
                S. Stamatakis and I. Kaffas I. and I. Delivos,
                \emph {Extension of two Bonnet's Theorems to the 3-dimensional relative Differential Geometry},
                J. Geom. DOI: 10.1007/s00022-017-0395-x.
\end{thebibliography}
\end{document}